\documentclass[12pt]{amsart}
\usepackage{url}
\usepackage{mathpazo}
\usepackage[top=1.25in, bottom=1.25in, left=1.05in, right=1.05in]{geometry}


\def\P{\mathbb{P}}
\def\E{\mathbb{E}}
\newcommand{\N}{\mathbb{N}}

\newcommand{\X}{\mathcal{X}}

\newcommand{\VCdim}{\mathrm{VCdim}}

\newcommand{\ind}[1]{\mathbf{1}\big\{#1\big\}} 
\usepackage{enumerate}

\usepackage[style=numeric, isbn=false, doi=false, url=false, backend=biber]{biblatex}
\DeclareFieldFormat[article,periodical]{number}{}
\renewbibmacro{in:}{%
 \ifentrytype{article}{}{%
 \printtext{\bibstring{in}\intitlepunct}}}
\DeclareFieldFormat[article,incollection]{pages}{#1}%
\addbibresource{PRIMES_VC.bib}

\usepackage{amsthm,amsmath,amssymb,graphicx}
\usepackage[colorlinks=true,citecolor=black,linkcolor=black,urlcolor=blue]{hyperref}

\theoremstyle{plain}
\newtheorem{defn}{Definition}[section]

\newtheorem{prop}[defn]{Proposition}
\newtheorem{lem}[defn]{Lemma}
\newtheorem{thm}[defn]{Theorem}
\newtheorem{cor}[defn]{Corollary}


\begin{document} 
\title[VC-dimension, primes, and boosting]{The VC-dimension of a class of multiples of the primes, \\ and a connection to AdaBoost}
\author{Andrew M. Thomas} 
\address{Center for Applied Mathematics \\ Cornell University}
\email{amt269@cornell.edu} 

\begin{abstract}
We discuss the VC-dimension of a class of multiples of integers and primes (equivalently indicator functions) and demonstrate connections to prime counting functions. Additionally, we prove limit theorems for the behavior of an empirical risk minimization rule as well as the weights assigned to the output hypothesis in AdaBoost for these ``prime-identifying'' indicator functions, when we sample $m_n$ i.i.d. points uniformly from the integers $\{2, \dots, n\}$. 
\end{abstract}

\date{}
\subjclass[2010]{11B25, 68Q32, 60F15}
\keywords{}
\maketitle

\allowdisplaybreaks

\section{The VC-dimension of a class of multiples of primes}

Let $\N$ denote the positive integers. Recall that for functions $f, g: \N \to (0, \infty)$ we say $f(n) = \omega(g(n))$ if $\lim_{n \to \infty} f(n)/g(n) = \infty$. Consider a learning problem with domain $\X = \{n \in \N: n \geq 2\}$, label set $\{0, 1\}$ and hypothesis class $\mathcal{H}' := \{h_p: p \in \X, p \text{ is prime}\},$ where 
\[
h_p(x) = 
\begin{cases}
1 &\mbox{ if } x \leq p \text{ or } p \nmid x, \\
0 &\mbox{ otherwise}. 
\end{cases}
\]

\noindent We have $h_p(x) = 0$ if and only if $x$ is divisible by $p$---i.e., $p \mid x$ and $x > p$. Note that if $x$ is prime then $h_p(x) = 1$. Another characterization is that $h_p(x) = 0$ if and only if $x \in p\N\setminus \{p\} = \{2p, 3p, 4p, \dots\}$. In this way, our efforts can be seen as assessing the sample complexity of hypothesis class of indicator functions associated to the Sieve of Erastosthenes. A notable study containing results related to our investigations of VC-dimension is \cite{learning_integer}.

A class of indicator functions $\mathcal{G} = \big\{g: \X \to \{0,1\}\big\}$ is said to \emph{shatter} a set $C = \{c_1, \dots, c_\ell\} \subset \X$ if the cardinality 
\[
\Big | \big\{(g(c_1), \dots, g(c_\ell)): g \in \mathcal{G}\big\}\Big | = 2^{\ell}. 
\]
The \emph{VC-dimension} of $\mathcal{G}$, denoted $\VCdim(\mathcal{G})$ is the size of the largest set that $\mathcal{G}$ shatters. See \cite{shaishai} for more details. To begin, we will show that $\mathcal{H}'$ can shatter a set of size 2. Let $C = \{6, 10\}$. Then $(h_2(6), h_2(10)) = (0,0)$, $(h_3(6), h_3(10)) = (0, 1)$, $(h_5(6), h_5(10) = (1, 0)$ and $(h_{7}(6), h_7(10)) = (1,1)$. Notice there are $2^2-1 =3$ unique prime factors in $6=2\cdot3$ and $10=2\cdot 5$. Thus, $\VCdim(\mathcal{H}') \geq 2$. We will now show that $\VCdim(\mathcal H') \geq n$ for any $n \geq 3$. Let us enumerate the first $2^n$ primes $p_1 < p_2 < \cdots < p_{2^n}$ and recognize that 
\[
2^n  = \sum_{k=1}^n \binom{n}{k}.
\]
Each $c_i$ will be the product of $2^{n-1}$ primes. Label all $2^n$ subsets of $\{1, \dots, n\}$ in some fashion, e.g. $A_1, \dots, A_{2^n}$. 
We may now form $c_i$, $i=1,\dots, n$ as follows:
\begin{equation} \label{e:ci}
c_i := \prod_{k=1}^{2^n} p_k^{\mathbf{1}_{A_k}(i)}.
\end{equation}
We may take a second to see that 
\[
\sum_{k=1}^{2^n} \mathbf{1}_{A_k}(i) = \sum_{j=0}^{n-1} \binom{n-1}{j} = 2^{n-1}, 
\]
as $i$ must be in each set and there are $\binom{n-1}{j}$ other options for a set containing index $i$ of cardinality $j+1$. For simplicity, denote $h_{p_k} = h_k$, $k = 1, \dots, 2^n$. We aim to prove that $h_k(c_i) = 1 - \mathbf{1}_{A_k}(i)$. By definition, 
$h_k(c_i) = 0$ if and only if $p_k \mid c_i$ and $c_i > p_k$. However, when $n \geq 2$ then $c_i$ is the product of $2^{n-1} \geq 2$ primes, so that if $p_k \mid c_i$, then $c_i > p_k$. Hence, $h_k(c_i) = 0$ if and only if $p_k \mid c_i$. 
%
%
However, $p_k \mid c_i$ if and only if $i \in A_k$, as then and only then will $p_k$ appear in $c_i$'s factorization. Thus, $h_k(c_i) = 0$ if and only if $\mathbf{1}_{A_k}(i) = 1$, or $h_k(c_i) = 1-\mathbf{1}_{A_k}(i)$. We have thus proved that $\mathcal{H}'$ shatters a set of size $n$ for all $n \geq 2$, hence 
\[
\VCdim(\mathcal{H}') = \infty.
\]
All finite classes have finite VC-dimension, hence the infinitude of the set of primes is equivalent to $\VCdim(\mathcal H') = \infty$. 
Thus, we have established a theorem which relates Euclid's second theorem to the VC-dimension of the class $\mathcal{H}'$.  
\begin{thm}\label{t:main}
The following statements are equivalent: \vspace{11pt}
\begin{enumerate}[\it (i)]
	\item The set of primes is infinite
	\vspace{11pt}
	\item $\VCdim(\mathcal H') = \infty$
\end{enumerate}
\end{thm}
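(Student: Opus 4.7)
The plan is to establish the two implications $(i) \Rightarrow (ii)$ and $(ii) \Rightarrow (i)$ separately, noting that essentially all the work has been done in the discussion preceding the theorem.

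For $(i) \Rightarrow (ii)$, I would directly invoke the shattering construction already exhibited. Assuming the set of primes is infinite, for every $n \geq 2$ one can enumerate at least $2^n$ distinct primes $p_1 < p_2 < \cdots < p_{2^n}$, which is precisely what the explicit construction of the points $c_1,\dots,c_n$ in \eqref{e:ci} requires. Since that construction shows $\mathcal{H}'$ shatters a set of size $n$ for every such $n$, the VC-dimension is unbounded, i.e.\ $\VCdim(\mathcal{H}') = \infty$.

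For $(ii) \Rightarrow (i)$, I would argue by contrapositive. Suppose the set of primes is finite, say of cardinality $N$. Then $\mathcal{H}' = \{h_{p_1}, \dots, h_{p_N}\}$ is a finite hypothesis class with $|\mathcal{H}'| = N$. A standard and elementary fact (noted in the paragraph preceding the theorem and recorded in \cite{shaishai}) is that any finite class $\mathcal G$ satisfies $\VCdim(\mathcal G) \leq \log_2 |\mathcal G|$, because shattering a set of size $\ell$ requires producing $2^\ell$ distinct labelings, hence at least $2^\ell$ distinct hypotheses. Thus $\VCdim(\mathcal{H}') \leq \log_2 N < \infty$, contradicting $(ii)$.

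Combining the two implications yields the theorem. I do not anticipate any genuine obstacle here: the forward direction is a direct application of the explicit shattering construction, and the reverse direction is the trivial bound $\VCdim(\mathcal G) \leq \log_2 |\mathcal G|$ for finite classes. The only minor care required is to ensure that the shattering argument in \eqref{e:ci} genuinely needs infinitely many primes (it requires $2^n$ distinct primes for each $n$), which is exactly the content of Euclid's second theorem.
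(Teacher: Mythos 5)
Your proposal is correct and follows essentially the same route as the paper: the forward implication is the explicit shattering construction at \eqref{e:ci} (which consumes $2^n$ distinct primes to shatter a set of size $n$), and the reverse implication is the observation that a finite class has finite VC-dimension. Your explicit bound $\VCdim(\mathcal G) \leq \log_2 |\mathcal G|$ is a harmless sharpening of the paper's remark that ``all finite classes have finite VC-dimension.''
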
 

We have two interesting corollaries as a result of the above. We state the first here. 
\begin{cor} \label{c:pshat}
Suppose that $\mathcal H'$ shatters a set $\{c_1, \dots, c_n\}$. Then $c_i$ is divisible by the product of $2^{n-1}$ distinct primes and $\prod_{i=1}^n c_i$ has at least $2^n-1$ distinct prime factors
\end{cor}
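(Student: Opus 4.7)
The plan is to exploit the shattering hypothesis directly, without mimicking the explicit construction from Theorem~\ref{t:main}. If $\mathcal{H}'$ shatters $\{c_1, \dots, c_n\}$, then every labeling in $\{0,1\}^n$ is realized by some $h_p \in \mathcal{H}'$, so for each subset $A \subseteq \{1, \dots, n\}$ I would fix a prime $p_A$ such that $h_{p_A}(c_i) = 0$ if and only if $i \in A$. The first observation is that the map $A \mapsto p_A$ must be injective, because distinct subsets correspond to distinct labeling patterns on $\{c_1, \dots, c_n\}$; this yields a family $\{p_A\}_{A \subseteq \{1, \dots, n\}}$ of $2^n$ pairwise distinct primes indexed by subsets.

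For the first claim, I would fix $i$ and enumerate the subsets $A$ containing $i$; there are exactly $2^{n-1}$ such subsets. For each one, the condition $h_{p_A}(c_i) = 0$ forces $p_A \mid c_i$ directly from the definition of $h_{p_A}$ (the auxiliary inequality $c_i > p_A$ is automatic and need not be separately verified). By the injectivity noted above these $2^{n-1}$ primes are pairwise distinct, so their product divides $c_i$.

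For the second claim, I would observe that whenever $A$ is nonempty the prime $p_A$ divides at least one $c_i$ — namely any $c_i$ with $i \in A$ — and therefore divides $\prod_{j=1}^n c_j$. Since there are $2^n - 1$ nonempty subsets of $\{1, \dots, n\}$ and the corresponding primes are pairwise distinct, the product has at least $2^n - 1$ distinct prime factors. The empty subset's prime $p_\emptyset$ corresponds to the all-ones labeling and divides none of the $c_i$, which explains why the bound is $2^n - 1$ rather than $2^n$. I do not expect any real obstacle here, as the whole argument is a bookkeeping exercise that repackages the shattering hypothesis; the only point requiring care is the injectivity of $A \mapsto p_A$, which is immediate because distinct labelings on $\{c_1, \dots, c_n\}$ must come from distinct hypotheses.
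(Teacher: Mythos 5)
Your proposal is correct and takes essentially the same route as the paper's proof: both assign to each subset $A \subseteq \{1, \dots, n\}$ a distinct prime whose hypothesis realizes the corresponding labeling, conclude that the $2^{n-1}$ primes attached to subsets containing $i$ all divide $c_i$, and obtain the second claim by counting the $2^n - 1$ nonempty subsets. Your explicit remark on the injectivity of $A \mapsto p_A$ is exactly what the paper means by assigning a ``unique prime'' to each labeling.
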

\begin{proof}
If $\mathcal H'$ shatters $\{c_1, \dots, c_n\}$, then we have for each subset $A_k$ of $\{1, \dots, n\}$ that 
\[
\big(h(c_1), \dots, h(c_n)\big) = \Big( 1-\mathbf{1}_{A_k}(1), \dots, 1-\mathbf{1}_{A_k}(n) \Big),
\]
for some $h \in \mathcal H'$, which we assign a unique prime $p_{j_k}$. Therefore, $h(c_i) = 0$, i.e. $p_{j_k} \mid c_i$ and $c_i > p_{j_k}$, if and only if $i \in A_k$. As a result of the shattering,
\begin{equation} \label{e:max_prime}
c_i > \max_k p_{j_k}^{\mathbf{1}_{A_k}(i)},
\end{equation}
and 
\begin{equation}\label{e:prod_prime}
\prod_{k=1}^{2^n} p_{j_k}^{\mathbf{1}_{A_k}(i)} \mid c_i.
\end{equation}
The last statement in the corollary follows from the fact that there exists an $i$ and a $k$ such that $i \in A_k$ for all but one subset of $\{1, \dots, n\}$---the empty set.
\end{proof}

Note that \eqref{e:max_prime} is automatically satisfied by \eqref{e:prod_prime}, as all primes are at least 2 and $\prod_{k=1}^{2^n} p_{j_k}^{\mathbf{1}_{A_k}(i)}$ consists of precisely $2^{n-1}$ primes. Hence, if there exists primes $p_{j_1}, \dots, p_{j_{2^n}}$ such that \eqref{e:prod_prime} holds, then \eqref{e:max_prime} is satisfied, so that $h_{p_{j_k}}(c_i) = 0$ for all $k$ such that $i \in A_k$.  

Now suppose that we have the class $\mathcal{H}'_{\leq n} = \{ h_p: \, p \leq n\}$, and a set $C$ of size $j(n) = \lfloor \log_2 \pi(n) \rfloor$, where $C = \{c_1, \dots, c_{j(n)}\}$ and each $c_i$ defined as at \eqref{e:ci}. We may shatter $C$ as there are $2^{j(n)} \leq \pi(n)$ primes available to us. Additionally $|\mathcal{H}'_{\leq n}| \leq \pi(n)$. Thus, we have an interesting corollary to Theorem~\ref{t:main}. 
\begin{cor}
For the hypothesis class $\mathcal{H}'_{\leq n} = \{ h_p: \, p \leq n\}$, we have
\[
\VCdim(\mathcal{H}'_{\leq n}) = \lfloor \log_2 \pi(n) \rfloor.
\]
\end{cor}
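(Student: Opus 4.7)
The plan is to establish the stated equality by matching upper and lower bounds, both of which are essentially foreshadowed by the paragraph preceding the corollary.

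For the upper bound, I would invoke the elementary observation that any finite hypothesis class $\mathcal{G}$ satisfies $\VCdim(\mathcal{G}) \leq \lfloor \log_2 |\mathcal{G}| \rfloor$, since shattering a set of size $k$ requires $2^k$ distinct labelings and hence at least $2^k$ distinct hypotheses. Because each prime $p \leq n$ yields a distinct hypothesis $h_p$ (they disagree at $2p$, for instance), we have $|\mathcal{H}'_{\leq n}| = \pi(n)$, and therefore $\VCdim(\mathcal{H}'_{\leq n}) \leq \lfloor \log_2 \pi(n) \rfloor$.

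For the lower bound, I would recycle the shattering construction from the proof of Theorem~\ref{t:main}. Set $k = \lfloor \log_2 \pi(n) \rfloor$ and first assume $k \geq 2$. Since $2^k \leq \pi(n)$, the first $2^k$ primes $p_1 < p_2 < \cdots < p_{2^k}$ all lie in $\{2,\dots,n\}$, so each $h_{p_j}$ belongs to $\mathcal{H}'_{\leq n}$. Enumerating the subsets $A_1,\dots,A_{2^k}$ of $\{1,\dots,k\}$ and defining $c_1,\dots,c_k$ through \eqref{e:ci} with these $2^k$ primes, the argument following \eqref{e:ci} applies verbatim to show $h_{p_j}(c_i) = 1 - \mathbf{1}_{A_j}(i)$, so all $2^k$ labelings of $\{c_1,\dots,c_k\}$ are realized by hypotheses in $\mathcal{H}'_{\leq n}$.

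The only loose ends are the degenerate cases $k \in \{0,1\}$ that the Theorem~\ref{t:main} construction does not directly cover. The case $k=0$ is vacuous. For $k=1$ we have $\pi(n) \geq 2$ and hence $n \geq 3$, so both $h_2$ and $h_3$ lie in $\mathcal{H}'_{\leq n}$, and the singleton $\{4\}$ is shattered since $h_2(4)=0$ while $h_3(4)=1$. I do not anticipate any real obstacle: the upper bound is pure counting, and the lower bound merely requires checking that the primes demanded by the Theorem~\ref{t:main} construction all fit below $n$ at the extremal size $k = \lfloor \log_2 \pi(n) \rfloor$.
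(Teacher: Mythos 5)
Your proposal is correct and matches the paper's own argument: the paper likewise obtains the upper bound from $|\mathcal{H}'_{\leq n}| \leq \pi(n)$ and the lower bound by running the construction of \eqref{e:ci} with the first $2^{\lfloor \log_2 \pi(n)\rfloor}$ primes, all of which lie below $n$. Your explicit treatment of the degenerate cases $k \in \{0,1\}$ is a minor addition of care, not a difference in method.
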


One thing that we may conclude from Theorem~\ref{t:main} is that the class $\mathcal{H}'$ of functions which cross out the multiples of primes (resp. positive integers greater than 1), is not \emph{uniformly learnable} (in a probably approximately correct sense)---cf. \cite{shaishai}. 
\section{The hopelessness of boosting the primes}

The original consideration of the problem described above was under the pretense that one might be able to ``learn'' the primes if one could leverage a knowledge of divisibility by a certain integer. That is, could you use some combination of elementary heuristics in $\mathcal{H}'$ to determine whether or not a number is prime? The premise of using simple ``rules of thumb'' underlies the idea behind AdaBoost (cf. \cite{boosting} for a detailed account). How good is the output of AdaBoost using a base hypothesis class $\mathcal{H}'$? As it turns out, when you have a sufficiently large uniform random sample from $\X_n$, AdaBoost is powerless to try to predict which numbers are primes and which aren't. The primes are simply too rich for $\mathcal{H}'$ to ``understand''.

Before continuing to the results, let us show that there is 1) no loss in generality in considering only the divisibility by primes and that 2) weak learning isn't possible. We address the first point by considering the larger hypothesis class $\mathcal{H} = \{h_d: d \in \X\}$ for $h_d$ defined analogously to $h_p$ above, though we allow our $d$ to be \emph{any} integer greater than or equal to 2, not just a prime. If our instances $X_1, \dots, X_m$ are labelled by whether or not they are prime---i.e. $r(x) = 1$ if $x$ is prime and $0$ otherwise---then any algorithm that returns an \emph{empirical risk minimization} (ERM) hypothesis will return only a hypothesis in $\mathcal{H}'$. We will denote our sample as $S = (X_1, r(X_1)), \dots, (X_m, r(X_m))$. Choose nonnegative constants $\mathbf{a} = (a_1, \dots, a_m)$ such that $\sum_{i=1}^m a_i  =1$, and define the weighted empirical risk of $h_d$ as 
\begin{align*}
L_{\mathbf{a}}(S, d) &:= \sum_{i=1}^m a_i \ind{ h_d(X_i) \neq r(X_i) } \\
&= \sum_{i: \, r(X_i) = 0} a_i \ind{ h_d(X_i) = 1 }
\end{align*}
as only composite numbers factor into the error. For ease of exposition, define
\[
D(S, d) := \sum_{i: \, r(X_i) = 0} a_i \ind{X_i \text{ is divisible by } d}\ind{X_i > d},
\]
so that $L_{\mathbf{a}}(S, d) = 1-D(S,d)$. Now define
\[
d_S = \min\Big \{ d\in \X: D(S,d) = \max_{k \in \X} D(S, k) \Big\}.
\]
It is straightforward to show that $h_S := h_{d_S}$ is an ERM rule, as the training error for $h_d$ with respect to the prime labeling function is $1-D(S, d)$. We aim to show that $d_S$ is prime. To show this, suppose that $p \mid d_S$. Then $p \leq d_S$ and if $d_S \mid x_i$ and $x_i > d_S$ then $p \mid x_i$ and $x_i > p$. Thus
\[
D(S, d_S) \leq D(S, p), 
\]
but since $D(S, d_S)$ is maximal, $D(S, d_S) = D(S, p)$, and hence $p \geq d_S$, thus $p = d_S$. However, and perhaps no surprise due to the VC-dimension of $\mathcal{H}'$, the error $L_{\mathbf{a}}(S, d)$ can be arbitrarily large (depending on $\mathbf{a}$). Consider a sample of $m$ values consisting of products of consecutive primes, e.g. $x_1 = p_1p_2, x_2 = p_3p_4, \dots, x_m = p_{2m-1} p_{2m}$. All of these numbers are composite so that $L_{\mathbf{a}}(S, d) \geq 1-\max_i a_i$. With this established, we cannot necessarily find what is called a ``weak learner'' $h_d$ such that $L_{\mathbf{a}}(S, d) \leq 1/2 - \gamma$, where $\gamma > 0$. There is clearly no hope of boosting this hypothesis class to discover the primes---see \cite{boosting} for more information on weak learning. 

There is something more quantitatively forceful we can say about the hopelessness of our situation. We consider the stochastic distribution of the weights derived by running AdaBoost with $\mathcal{H}'$ (or $\mathcal{H}$) as the base hypothesis class and where the underlying distribution is uniform on $\{2, \dots, n\}$. Let $\X_n := \{2, \dots, n\} \subset \X$ and for a probability distribution $\mathcal{D}$ define the generalization error to be $L_{\mathcal{D}}(h_d) := \P(h_d(X) \neq r(X))$ and the empirical risk $L_S(h_d) := L_\mathbf{a}(S, d)$ where $\mathbf{a} = (1/m, \dots, 1/m)$. 

\begin{prop} \label{e:wt_conv}
Consider the hypothesis class $\mathcal{H}$ or $\mathcal{H}'$. Suppose the $\mathcal{D}_n$ is the uniform distribution on $\{2, \dots, n\}$ and we have the prime labeling function $x \mapsto r(x)$ for the instances $X_1, \dots, X_{m_n}$ in our sample $S_n$. Then if $m_n = \omega\big( (\log n)^2 \big)$, we have the following convergence in probability,
\[
L_{\mathcal{D}_n}(h_{S_n}) \overset{P}{\to} 1/2,
\]
as $n \to \infty$. Furthermore, if we have $m_n = \omega\big( (\log n)^3 \big)$, then we have convergence of the generalization error
\[
L_{\mathcal{D}_n}(h_{S_n}) \overset{\mathrm{a.s.}}{\to} 1/2, \quad n \to \infty,
\]
where $\overset{\mathrm{a.s.}}{\to}$ denotes almost sure convergence, or convergence with probability 1.
\end{prop}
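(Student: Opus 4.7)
By the analysis preceding the proposition, the ERM hypothesis satisfies $h_{S_n} = h_{d_{S_n}}$ with $d_{S_n}$ prime, and $d_{S_n} \le \max_i X_i \le n$. Hence $h_{S_n} \in \mathcal{H}'_{\le n}$ regardless of whether ERM is run over $\mathcal{H}$ or $\mathcal{H}'$, and it suffices to control deviations between $L_{S_n}$ and $L_{\mathcal{D}_n}$ uniformly over the $\pi(n)$ primes $p \le n$---a subclass with only logarithmically many hypotheses, even though $\mathcal{H}$ and $\mathcal{H}'$ both have infinite VC-dimension.

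First I would compute the closed form
\[
L_{\mathcal{D}_n}(h_p) = \frac{n - \pi(n) - \lfloor n/p \rfloor}{n - 1},
\]
valid for any $p \ge 2$, by counting composites in $\{2,\dots,n\}$ that are not proper multiples of $p$ (noting that $p \mid X$, $X > p$ already forces $X$ composite). As a function of $p$ this is minimized at $p = 2$, where $L_{\mathcal{D}_n}(h_2) = 1/2 - O(1/\log n) \to 1/2$, using only the elementary bound $\pi(n) = o(n)$.

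Next, Hoeffding's inequality gives $\P\bigl(|L_{S_n}(h_p) - L_{\mathcal{D}_n}(h_p)| \ge \epsilon\bigr) \le 2 e^{-2 m_n \epsilon^2}$ for each fixed prime $p$, and a union bound over the at most $n$ primes $p \le n$ yields
\[
\P\Bigl(\sup_{p \le n, \, p \text{ prime}} |L_{S_n}(h_p) - L_{\mathcal{D}_n}(h_p)| \ge \epsilon \Bigr) \le 2 n e^{-2 m_n \epsilon^2}.
\]
On the complement of this event, the ERM inequality $L_{S_n}(h_{S_n}) \le L_{S_n}(h_2)$, combined with applying the $\epsilon$-bound twice (to the random hypothesis $h_{S_n}$ and to the fixed hypothesis $h_2$), yields the sandwich
\[
L_{\mathcal{D}_n}(h_2) \;\le\; L_{\mathcal{D}_n}(h_{S_n}) \;\le\; L_{\mathcal{D}_n}(h_2) + 2\epsilon,
\]
with the lower bound using that $h_2$ minimizes $L_{\mathcal{D}_n}$ over the class. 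Both ends converge to $1/2$ once $\epsilon = \epsilon_n \to 0$.

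All that remains is rate tracking. Taking $\epsilon_n = 1/\log n$, the tail bound becomes $2n \exp(-2 m_n/(\log n)^2)$; this vanishes exactly when $m_n = \omega((\log n)^2)$, which gives the convergence in probability. For almost sure convergence I would invoke Borel--Cantelli: if $m_n = \omega((\log n)^3)$, then $m_n/(\log n)^2$ eventually exceeds any constant multiple of $\log n$, so the tail is smaller than $n^{-K}$ for every $K$ and is summable, forcing the bad event to occur only finitely often. The main obstacle is purely bookkeeping---calibrating $\epsilon_n$ against $m_n$ so that Hoeffding's exponent beats the union-bound factor of $n$---since the essential conceptual work (ERM always outputs a prime, collapsing an infinite-VC class to a set of only $\pi(n)$ hypotheses) has already been dispatched in the text preceding the proposition.
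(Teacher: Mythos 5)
Your route is genuinely different from the paper's. You run a uniform-convergence argument---Hoeffding plus a union bound over the at most $n$ candidate hypotheses $h_d$ with $d \leq n$---and then sandwich $L_{\mathcal{D}_n}(h_{S_n})$ between $L_{\mathcal{D}_n}(h_2)$ and $L_{\mathcal{D}_n}(h_2) + 2\epsilon$. The paper never compares empirical and true risk uniformly over the class: it shows directly that $\P(d_{S_n} \neq 2) \to 0$ by reducing that event to a single random walk $T_n = \sum_i Y_i$ (comparing the counts of even and odd composites in the sample) and applying Hoeffding exactly once, then observes that $|L_{\mathcal{D}_n}(h_2) - 1/2| \to 0$. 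Your closed form $L_{\mathcal{D}_n}(h_p) = (n - \pi(n) - \lfloor n/p \rfloor)/(n-1)$, the observation that $h_2$ is the population minimizer (which gives your deterministic lower bound), and the entire almost-sure half of your argument are correct.

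The gap is in the calibration for the in-probability claim. With $\epsilon_n = 1/\log n$ your tail bound is $2n\exp(-2m_n/(\log n)^2) = 2\exp\big(\log n - 2m_n/(\log n)^2\big)$, and the hypothesis $m_n = \omega((\log n)^2)$ only guarantees $m_n/(\log n)^2 \to \infty$, not that it dominates $\log n$. For instance $m_n = (\log n)^2 \log\log n$ satisfies the hypothesis, yet the exponent is $\log n - 2\log\log n \to +\infty$ and your bound diverges; so it is not true that the bound ``vanishes exactly when $m_n = \omega((\log n)^2)$.'' The union bound over roughly $n$ hypotheses costs an additive $\log n$ in the exponent, and with $\epsilon_n = 1/\log n$ that cost forces $m_n = \omega((\log n)^3)$---which is precisely why the paper's single-Hoeffding argument, paying no union-bound penalty, gets away with $\omega((\log n)^2)$ for convergence in probability. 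Your approach is salvageable without new ideas: choose $\epsilon_n$ adapted to $m_n$, e.g. $\epsilon_n^2 = (\log n)/\sqrt{m_n}$, so that $\epsilon_n \to 0$ (because $m_n \gg (\log n)^2$) while $2m_n\epsilon_n^2 = 2\sqrt{m_n}\,\log n \gg \log n$, making the tail $2\exp\big((1-2\sqrt{m_n})\log n\big) \to 0$. As written, however, the first assertion of the proposition is not established.
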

\begin{proof}
We begin by denoting 
\[
t(n) := \big | \{x \in \mathcal{X}: x \leq n, \, x \text{ is even}\} \big | = 
\begin{cases}
(n-1)/2 &\mbox{if } n \text{ is odd }, \\
n/2 &\mbox{if } n \text{ is even}.  
\end{cases}
\]
It follows that $|t(n)/(n-1) -1/2| \leq \frac{1}{2(n-1)}$. Notice that 
\begin{align*}
L_{\mathcal{D}_n}(h_2) &= \P(h_2(X) = 1, r(X) = 0) \\
&= \P(f(X) = 0) - P(h_2(X) = 0, r(X) = 0) \\
&= \P(f(X) = 0) - P(h_2(X) = 0) \\
&= 1 - \frac{\pi(n)}{n-1} - \frac{t(n) - 1}{n-1},
\end{align*}
hence $|L_{\mathcal{D}_n}(h_2) - 1/2| \leq (3/2 + \pi(n))/(n-1) \to 0$, $n \to \infty$. So there exists some $N$ such that if $n \geq N$ then 
\[
|L_{\mathcal{D}_n}(h_2) - 1/2| \leq (3/2 + \pi(n))/(n-1) \leq \epsilon/2.
\]
Elementary union bounds yield that 
\begin{align*}
&\P( |L_{\mathcal{D}_n}(h_{S_n}) - 1/2| > \epsilon) \\
&\qquad \leq \P( |L_{\mathcal{D}_n}(h_{S_n}) - L_{\mathcal{D}_n}(h_2)| > \epsilon/2) + \ind{  |L_{\mathcal{D}_n}(h_2) - 1/2| > \epsilon/2 } \\
&\qquad \leq \P( d_{S_n} \neq 2 ),
\end{align*}
for $n \geq N$. We aim to show that $\P( d_{S_n} \neq 2 ) \to 0$, as $n \to \infty$. Define a sequence of random variables $Y_i$, $i = 1, 2, \dots$ where $Y_i = 1$ if $X_i$ is even and not prime, $Y_i = 0$ if $X_i$ is prime and $Y_i = -1$ if $X_i$ is odd and not prime. 
\[
\P(Y_i = 1) = \frac{t(n)-1}{n-1} > \P(Y_i = -1) = \frac{n - t(n) - \pi(n)}{n-1}
\]
Therefore, we can define a random walk $T_n := \sum_{i=1}^{m_n} Y_i$. If $T_n \geq 0$, then $d_{S_n} = 2$, and vice versa. Hence, $\P(d_{S_n} \neq 2) = \P(T_n < 0)$. We will use an approximation to this probability, via Hoeffding's inequality. We note that 
\[
\mu_n := \E[Y_i] = \frac{2\big(t(n)-1\big) + \pi(n)}{n-1} - 1,
\]
Additionally, standard properties of $\pi(n)$ imply that there exists some $N_0 \geq N$ such that
\[
\mu_n \geq \frac{1}{\log n} - \frac{1}{n} \geq \frac{1}{2\log n},
\]
for all $n \geq N_0$. We see that as $\mu_n > 0$, then
\[
\P(T_n < 0) = P(T_n/m_n < \mu_n - \mu_n) \leq e^{-\frac{m_n \mu_n^2}{2}}, 
\]
by Hoeffding's inequality and we know that for $n \geq N_0$ we have
\[
m_n \mu_n^2 \geq \frac{m_n}{8(\log n)^2} \to \infty, \quad n \to \infty,
\]
by the assumption on $m_n$. If $m_n = \omega\big( (\log n)^3 \big)$, then in particular there exists some $\delta > 0$ such that $m_n/(8 \log n)^2 \geq (1 + \delta) \log n$ for large enough $n$. As $\sum_{n=1}^{\infty} n^{-(1+\delta)} < \infty$, we have almost sure convergence by the Borel-Cantelli lemma. 
\end{proof}

In AdaBoost, given a sample $S$, we have at round $t \in \N$ a weight $w_t$ for our chosen hypothesis $h_t \in \mathcal{H}'$. We denote $w_t$ as $W_t$ when $S$ is seen as random as opposed to fixed/observed. Suppose that our instances are generated according to $\mathcal{D}_n$ and are labelled according to the prime labeling function $x \mapsto r(x)$. \emph{We suppose that $0$ is redefined as $-1$ in our indicator functions} to comport with the typical setting. One question is: as $n \to \infty$ (supposing that $m_n \to \infty$ as well) what is the stochastic behavior of the random weights $(W_{t,n}, {t \in \N})$? We have already established that the ERM threshold $d_{S_n}$ for the first round of AdaBoost becomes $2$ for large enough $n$ (with probability 1) in the proof of Proposition~\ref{e:wt_conv}. In the first round of AdaBoost, we set each $a_i = 1/{m_n}$ with $\epsilon_{1,n} = L_{S_n}(h_{S_n})$ and hence we get that 
\[
W_{1,n} = \frac{1}{2} \log \bigg( \frac{1}{\epsilon_{1,n}} - 1\bigg) \overset{\text{a.s.}}{\to} 0, \quad n \to \infty,
\]
because $\big |L_{S_n}(h_{S_n}) - L_{\mathcal{D}_n}(h_{S_n})\big | \overset{\text{a.s.}}{\to} 0, n \to \infty$ when $m_n = \omega\big((\log n)^3\big)$, by Hoeffding's inequality, the triangle inequality and Proposition~\ref{e:wt_conv}. We can extend this result quite a bit further. Before beginning, note that 
\[
\epsilon_{t,n} := \min_{h \in \mathcal{H}'} \sum_{i=1}^{m_n} D^{(t)}_i \ind{h(X_i) \neq r(X_i)},
\]
where $D_i^{(t)}$ are weights summing to 1 (defined at \eqref{e:dit}) and $h_t$ is the hypothesis in $\mathcal{H}'$ which minimizes $\epsilon_{t,n}$. Furthermore, $W_{t,n}$ is defined as 
\[
W_{t, n} := \frac{1}{2} \log \Big( \frac{1}{\epsilon_{t,n}} - 1\Big).
\]
\begin{thm} \label{t:all_wts}
Suppose that $m_n = \omega\big((\log n)^3\big)$ and that $W_{t,n}$ are the weights for the $t^{th}$ round AdaBoost hypothesis based on the sample $S_n$ and uniform distribution $\mathcal{D}_n$. Then we have for all $t \in \N$ that 
\[
W_{t,n} \overset{\text{a.s.}}{\to} 0, \quad n \to \infty.
\]
\end{thm}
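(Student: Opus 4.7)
The plan is to proceed by induction on $t$. The base case $t=1$ has already been established just before the theorem statement. For the inductive step, I would exploit the fact that AdaBoost's reweighting is multiplicative in the factors $e^{\pm W_{k,n}}$: if all previously computed round weights $W_{1,n},\dots,W_{t,n}$ are close to zero almost surely, then the distribution $D^{(t+1)}$ is uniformly close to $(1/m_n,\dots,1/m_n)$, so the round-$(t+1)$ minimum weighted error $\epsilon_{t+1,n}$ is close to the minimum unweighted empirical error $\epsilon_{1,n}$, which already converges to $1/2$.

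Quantitatively, unrolling the AdaBoost update yields
\[
D^{(t+1)}_i = \frac{\exp\!\big(-\sum_{k=1}^{t} W_{k,n}\, y_i h_k(X_i)\big)}{\sum_{j=1}^{m_n}\exp\!\big(-\sum_{k=1}^{t} W_{k,n}\, y_j h_k(X_j)\big)},
\]
where $y_i = 2 r(X_i) - 1 \in \{-1,+1\}$. Since $|y_i h_k(X_i)| = 1$, both the numerator and every summand in the denominator lie in $[e^{-W^*_n},e^{W^*_n}]$ with $W^*_n := \sum_{k=1}^{t} |W_{k,n}|$. Hence
\[
\max_i \Big| D^{(t+1)}_i - \tfrac{1}{m_n} \Big| \leq \tfrac{1}{m_n}\bigl(e^{2W^*_n}-1\bigr),
\]
so for every $h \in \mathcal{H}'$ the reweighted error differs from $L_{S_n}(h)$ by at most $e^{2W^*_n}-1$, and minimizing over $h$,
\[
\bigl|\epsilon_{t+1,n} - \epsilon_{1,n}\bigr| \leq e^{2W^*_n}-1.
\]
By the inductive hypothesis $W^*_n \overset{\mathrm{a.s.}}{\to} 0$, so the right side vanishes almost surely.

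It remains to show that $\epsilon_{1,n} = L_{S_n}(h_{S_n}) \overset{\mathrm{a.s.}}{\to} 1/2$. The proof of Proposition~\ref{e:wt_conv} shows $d_{S_n} = 2$ eventually almost surely (the bound $\P(T_n<0)\leq e^{-m_n\mu_n^2/2}$ is summable under $m_n = \omega((\log n)^3)$), hence $h_{S_n} = h_2$ for all $n$ large. A Hoeffding-plus-Borel--Cantelli argument then yields $|L_{S_n}(h_2) - L_{\mathcal{D}_n}(h_2)| \overset{\mathrm{a.s.}}{\to} 0$, and combined with $L_{\mathcal{D}_n}(h_2) \to 1/2$ (already noted in that proof) we conclude $\epsilon_{1,n} \overset{\mathrm{a.s.}}{\to} 1/2$, so $\epsilon_{t+1,n} \overset{\mathrm{a.s.}}{\to} 1/2$ and $W_{t+1,n} = \tfrac{1}{2}\log(1/\epsilon_{t+1,n}-1) \overset{\mathrm{a.s.}}{\to} 0$, closing the induction. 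The main obstacle is not the AdaBoost algebra but this last step: everything ultimately rests on the eventual identification $h_{S_n} = h_2$, which in turn rests on the $m_n=\omega((\log n)^3)$ hypothesis; any weaker growth would leave room for composite ``divisors'' to be chosen at round one with positive probability and destroy the $1/2$ limit.
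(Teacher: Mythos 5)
Your proposal is correct and follows essentially the same route as the paper: induction on $t$, with the key step being that the AdaBoost reweighting keeps $D^{(t+1)}_i$ within a factor $e^{\pm 2\sum_k W_{k,n}}$ of $1/m_n$, so that $\epsilon_{t+1,n}$ is forced to the same limit $1/2$ as $\epsilon_{1,n}$. Your version is if anything slightly more careful than the paper's (you use $\sum_k |W_{k,n}|$ rather than implicitly assuming the round weights are nonnegative, and you spell out the $\epsilon_{1,n}\overset{\mathrm{a.s.}}{\to}1/2$ step that the paper delegates to the discussion preceding the theorem), but the decomposition and the key estimate are the same.
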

\begin{proof}
We will define $D^{(t)}_i$ below. We have already established that $W_{1,n} \to 0$ a.s. as $n \to \infty$. We will now show that $\P\big(  \lim_{n \to \infty} W_{t,n} = 0 \big) = 1$ for each $t \in \N$. We proceed by induction (we have already demonstrated the base case) and suppose that for all $s < t+1$ that $\P\big( \lim_{n \to \infty} W_{s,n} = 0 \big) = 1$. We begin by noting that for any $1 \leq i \leq m_n$ that
\begin{equation}\label{e:dit}
D^{(t+1)}_i := \frac{D^{(t)}_ie^{-W_{t,n}r(X_i)h_t(X_i)}}{\sum_{i=1}^{m_n} D^{(t)}_ie^{-W_{t,n}r(X_i)h_t(X_i)}},
\end{equation}
where $D^{(1)}_i = 1/m_n$. Hence, we establish that 
\[
D^{(t)}_ie^{-2W_{t,n}} \leq D^{(t+1)}_i \leq D^{(t)}_ie^{2W_{t,n}}.
\]
Therefore, 
\[
\frac{1}{m_n} \exp\bigg( -2\sum_{j=1}^t W_{j,n} \bigg) \leq D^{(t+1)}_i \leq \frac{1}{m_n} \exp\bigg( 2\sum_{j=1}^t W_{j,n} \bigg),
\]
for all $n$, and
\[
\exp\bigg( -2\sum_{j=1}^t W_{j,n} \bigg) L_{S_n}(h_{S_n}) \leq \epsilon_{t+1,n} \leq \exp\bigg( 2\sum_{j=1}^t W_{j,n} \bigg) L_{S_n}(h_{S_n}),
\]
which implies that $\epsilon_{t+1,n} \overset{\text{a.s.}}{\to} 1/2$, by the induction hypothesis. Therefore, $W_{t+1,n} \overset{\text{a.s.}}{\to} 0$ as desired. 
\end{proof}

What Theorem~\ref{t:all_wts} says is that the output hypothesis of AdaBoost, 
\[
H^T_n(x) := \text{sign}\Bigg( \sum_{t=1}^{T} W_{t,n} h_t(x) \Bigg),
\]
for any fixed $T \in \N$ and a sufficiently large sample size $n$, is pretty much useless in learning the primality of the integers. At the very least, it is no better than a function that predicts all odd numbers are prime. 



\section{VC-dimension of finite arithmetic progressions}

We conclude by returning a variation of our original problem and considering the class $\mathcal{H}_k = \{h_{d,k}: d \in \X\}$, $k \in \X$ where $h_{d,k}(x) = 0$ if and only if $x \in \{2d, \dots, kd\}$. Suppose that $\mathcal{H}'_k$ is the restriction of $\mathcal{H}_k$ where $d$ is prime. These classes also cannot misidentify the primes, as with $\mathcal{H}$ and $\mathcal{H}'$. The VC-dimension of $\mathcal{H}_2$ and $\mathcal{H}'_2$ both equal 1 and this can be seen by noting that if $c_1 = 2d$ and $c_1 \neq c_2$, then $c_2 \neq 2d$ so that if $h_{d,2}(c_1) = 1$ then $h_{d,2}(c_2) = 0$. Consider the set $C= \{12, 18\}$. Then $(h_{6,3}(12), h_{6,3}(18)) = (0, 0)$, $(h_{4,3}(12), h_{4,3}(18)) = (0, 1)$, $(h_{9,3}(12), h_{9,3}(18)) = (1,0)$ and $(h_{7,3}(12), h_{7,3}(18)) = (1,1)$. Thus, $\VCdim(\mathcal{H}_3) = 2$. Suppose that $\mathcal{H}_4$ shatters a set $\{c_1, c_2, c_3\}$ of size 3. Then, potentially relabeling instances, $c_1 = 2d$, $c_2 = 3d$ and $c_4=4d$ for some $d$. However, we must also have $c_1 = 3d'$ and $c_3 = 4d'$ or $c_1 = 2d'$ and $c_3 = 3d'$, which is impossible. Hence, $\VCdim(\mathcal{H}_4) = 2$. 

As a final example, let us look at the VC-dimension of $\mathcal{H}'_3$. To shatter a set $\{c_1, c_2\}$, we must have $c_1 = 2p = 3q$ and $c_2 = 3p = 2r$ for some primes $q < p < r$. Therefore, $2 \mid p$ and $3 \mid p$, which is impossible as $p$ is prime. Hence $\VCdim(\mathcal{H}'_3) = 1$. As usual, we will denote the number of primes less than or equal to $x$ as $\pi(x)$. It will help to state a short lemma, similar to Corollary~\ref{c:pshat}.

\begin{lem} \label{l:factor}
For $\mathcal{H}_k$ to shatter a set $C = \{c_1, \dots, c_\ell\}$, it must be the case that each $c_i$ has at least $2^{\ell-1}$ distinct factors $d_1, \dots, d_{2^{\ell-1}}$, $d_i \in \X$ for $i = 1, 2, \dots, 2^{\ell-1}$. As result of the structure of $\mathcal{H}_k$, if shattering occurs, we must also have that $2^{\ell-1} \leq k-1$. 
\end{lem}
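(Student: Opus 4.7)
My plan mirrors the structure of the proof of Corollary~\ref{c:pshat}. Assume $\mathcal{H}_k$ shatters $C = \{c_1, \dots, c_\ell\}$. For each of the $2^\ell$ subsets $A_j \subseteq \{1, \dots, \ell\}$ the shattering provides some $d_j \in \X$ such that the hypothesis $h_{d_j, k}$ realizes the bit pattern $(1 - \mathbf{1}_{A_j}(1), \dots, 1 - \mathbf{1}_{A_j}(\ell))$; equivalently, $h_{d_j, k}(c_i) = 0$ iff $i \in A_j$. By the definition of $h_{d,k}$, this is the same as saying that $c_i \in \{2 d_j, 3 d_j, \dots, k d_j\}$, i.e., $d_j \mid c_i$ and the quotient $c_i / d_j$ lies in $\{2, \dots, k\}$, precisely when $i \in A_j$.

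Next I would fix an index $i$ and look only at the $2^{\ell-1}$ subsets $A_j$ that contain $i$. The associated hypotheses $h_{d_j,k}$ must be pairwise distinct, since by construction they produce distinct zero patterns on $C$; moreover two different values of $d$ yield different zero sets $\{2d, \dots, kd\}$ (being identified by their minimum element $2d$), so the $d_j$'s indexed by these $2^{\ell-1}$ subsets are themselves pairwise distinct. Each such $d_j$ divides $c_i$ with $c_i / d_j \in \{2, \dots, k\}$, so $d_j \geq 2$, i.e.\ $d_j \in \X$. This gives the first claim: $c_i$ has at least $2^{\ell-1}$ distinct factors in $\X$.

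For the bound $2^{\ell - 1} \leq k - 1$, I would observe that the map $d_j \mapsto c_i / d_j$ is injective on this collection of $2^{\ell-1}$ divisors of $c_i$, since $c_i$ is fixed. Its image sits inside the $(k-1)$-element set $\{2, 3, \dots, k\}$, so $2^{\ell - 1} \leq k - 1$ by a counting argument. No step looks delicate; the only point that requires a little care is verifying that different $A_j$'s containing $i$ really do force different $d_j$'s, and that is settled by the observation that $h_{d,k}$ determines $d$ uniquely via the minimum of its zero set.
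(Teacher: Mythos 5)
Your argument is correct and is essentially the paper's own proof: fix $i$, use the $2^{\ell-1}$ subsets containing $i$ to produce $2^{\ell-1}$ distinct divisors $d_j$ of $c_i$, and then note that the distinct quotients $c_i/d_j$ all lie in $\{2,\dots,k\}$, forcing $2^{\ell-1}\leq k-1$. The paper writes the quotients as $a_{i,j}$ and compresses the distinctness argument, but the content is identical.
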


\begin{proof}
If $\mathcal{H}_k$ shatters $C$, then $h_{d,k}(c_i) = 0$ for $2^{\ell-1}$ distinct values of $d$. In other words $c_i = a_{i, 1}d_1 = \cdots = a_{i, m_i}d_m$, where $m_i \geq 2^{\ell-1}$ where all $a_{i,j} \leq k$ for $1 \leq j \leq m_i$. As the $d_j$ are distinct, so must the $a_{i,j}$ be. Therefore, we must have $m_i \leq k-1$. 
\end{proof}

Our final result gives a bound on the VC-dimension of $\mathcal{H}_k$ and $\mathcal{H}'_k$. This result is conceptually similar to Theorem 3.1 in \cite{learning_integer} which the VC-dimension of subsets of multiples of $d$ lying between $-n$ and $n$. 

\begin{prop}
\[
\big \lfloor \log_2 \pi ( \eta_k ) \big \rfloor \leq \VCdim(\mathcal{H}'_k) \leq \VCdim(\mathcal{H}_k) \leq \lceil \log_2(k-1) \rceil + 1
\]
where 
\[
\eta_k := \lfloor (\log_2 k)/2 \rfloor.
\]
\end{prop}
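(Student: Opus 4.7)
The three inequalities split naturally. The middle one is immediate from $\mathcal{H}'_k \subseteq \mathcal{H}_k$. The upper bound follows directly from Lemma~\ref{l:factor}: any set shattered by $\mathcal{H}_k$ has size $\ell$ satisfying $2^{\ell-1} \leq k-1$, giving $\ell \leq \lfloor \log_2(k-1)\rfloor + 1 \leq \lceil \log_2(k-1)\rceil + 1$. The real content lies in the lower bound.

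My plan for the lower bound is to imitate the shattering construction from the proof of Theorem~\ref{t:main}, but to draw primes only from $\{p \leq \eta_k\}$ so that every quotient $c_i/p_r$ stays inside the window $\{2,\dots,k\}$ required by $\mathcal{H}'_k$. Set $\ell := \lfloor \log_2 \pi(\eta_k)\rfloor$; the cases $\ell \in \{0,1\}$ are either vacuous or trivially handled (e.g.\ take $c_1 := 2p$ for some prime $p \leq \eta_k$), so I assume $\ell \geq 2$. Choose distinct primes $p_1 < p_2 < \cdots < p_{2^\ell} \leq \eta_k$ (possible since $2^\ell \leq \pi(\eta_k)$) and enumerate the subsets of $\{1,\dots,\ell\}$ as $A_1, \dots, A_{2^\ell}$, then define $c_i := \prod_{r=1}^{2^\ell} p_r^{\mathbf{1}_{A_r}(i)}$, exactly as in \eqref{e:ci}. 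Just as in Theorem~\ref{t:main}, $p_r \mid c_i$ iff $i \in A_r$, which already forces $h_{p_r,k}(c_i) = 1$ whenever $i \notin A_r$. For the reverse direction, when $i \in A_r$ I need $c_i/p_r \in \{2,\dots,k\}$; the lower bound $c_i/p_r \geq 2$ is automatic since the quotient is a product of $2^{\ell-1}-1 \geq 1$ primes (using $\ell \geq 2$), so the task reduces to establishing
\[
c_i / p_r \;\leq\; \eta_k^{\,2^{\ell-1}-1} \;\leq\; k.
\]

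The main obstacle is the estimate $\eta_k^{2^{\ell-1}-1} \leq k$, which must be extracted from the interplay between $\pi(\eta_k)$ and $\eta_k$. I would deploy a Chebyshev-type bound $\pi(n) \leq C n / \ln n$ with an absolute constant $C < 4\ln 2$ (for instance the Rosser--Schoenfeld bound $\pi(n) < 1.26\,n/\ln n$ valid for $n \geq 17$). This gives $2^{\ell-1} - 1 \leq C \eta_k / (2 \ln \eta_k)$, and taking $\log_2$ of the target inequality reduces the claim to
\[
(2^{\ell-1}-1)\log_2 \eta_k \;\leq\; \frac{C \eta_k}{2 \ln 2} \;\leq\; \frac{C \log_2 k}{4 \ln 2} \;\leq\; \log_2 k,
\]
where the middle step uses $\eta_k \leq (\log_2 k)/2$. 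The finitely many small $k$ with $\eta_k < 17$ have $\ell \leq 2$ and can be dispatched by hand. Combining the three inequalities completes the proof.
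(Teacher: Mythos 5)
Your proposal is correct and follows essentially the same route as the paper: the same appeal to Lemma~\ref{l:factor} for the upper bound, the same monotonicity for the middle inequality, and the same shattering construction with primes drawn from $\{2,\dots,\eta_k\}$ for the lower bound. The only divergence is how the key estimate $c_i/p_r \leq k$ is closed: the paper applies the primorial bound $\prod_{p \leq n} p < 4^n$ (Hardy--Wright, Theorem 415), which gives $c_i/p_r \leq 4^{\eta_k} \leq 2^{\log_2 k} = k$ in one line, whereas your Rosser--Schoenfeld bound on $\pi(n)$ forces the constant-chasing $C < 4\ln 2$ and a separate hand-check of the finitely many $k$ with $\eta_k < 17$ --- both valid, but the primorial route is cleaner.
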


\begin{proof}
Suppose first that $\ell \geq \lceil \log_2(k-1) \rceil+2$. if $\mathcal{H}_k$ were to shatter $C = \{c_1, \dots, c_\ell\}$, then Lemma~\ref{l:factor} implies that $2^{\ell-1} \leq k-1$, or equivalently that 
\begin{align*}
\ell - 1 &\leq \log_2(k-1) \\
\Rightarrow \lceil \log_2(k-1) \rceil+1 &\leq \log_2(k-1),
\end{align*}
a contradiction. Hence, $\VCdim(\mathcal{H}_k) \leq \lceil \log_2(k-1) \rceil + 1$. 

Now consider a set of size $$\ell \leq \lfloor \log_2 \pi(n) \rfloor,$$ where $n \leq k$ so that there are at least $2^\ell \leq \pi(n)$ distinct primes less than or equal to $n$ in the set $\{2, \dots, k\}$. Enumerate these primes $p_1 < p_2 < \dots < p_{2^\ell} \leq n \leq k$ and define 
\[
c_i := \prod_{j=1}^{2^\ell} p_j^{\mathbf{1}_{A_j}(i)},
\]
where similar to the above $A_j$ are an enumeration of all the subsets of $\{1, \dots, \ell\}$. Now, consider some subset $A_m \subset \{1, \dots, \ell\}$. Then we have for $i \in A_m$ that
\[
c_i = p_m \prod_{\substack{1 \leq j \leq 2^\ell \\ j \neq m}} p_j^{\mathbf{1}_{A_j}(i)},
\]
so we are set if we can show that
\[
\prod_{\substack{1 \leq j \leq 2^\ell \\ j \neq m}} p_j^{\mathbf{1}_{A_j}(i)} \leq k.
\]
We have that 
\[
\prod_{\substack{1 \leq j \leq 2^\ell \\ j \neq m}} p_j^{\mathbf{1}_{A_j}(i)} \leq 2^{2n},
\]
by Theorem 415 in \cite{hardy_wright}. Thus, taking any $n \leq \lfloor \log_2 (k)/2 \rfloor$ will do. 
\end{proof}


\printbibliography

\end{document}